\documentclass{amsart}

\usepackage[english]{babel}
\usepackage[T1]{fontenc} 
\usepackage[latin1]{inputenc}

\usepackage{csquotes}
\usepackage[backend=bibtex, style=alphabetic, firstinits=true]{biblatex}
\addbibresource{SymSig}

\usepackage[a4paper,top=3cm,bottom=3cm,left=3.5cm,right=3.5cm,
bindingoffset=5mm]{geometry}

\usepackage{amssymb}
\usepackage{amsthm}
\usepackage{mathrsfs}
\usepackage{stmaryrd}
\usepackage{paralist}

\usepackage{color}
\usepackage{hyperref}
\definecolor{darkblue}{RGB}{0,0,160}
\hypersetup{
	colorlinks,breaklinks,
	citecolor=black,
	filecolor=black,
	linkcolor=darkblue,
	urlcolor=darkblue
}

\usepackage[capitalize]{cleveref}

\newtheorem{Theorem}{Theorem}[section]
\newtheorem*{Theorem*}{Theorem}
\newtheorem{Lemma}[Theorem]{Lemma}
\newtheorem{Cor}[Theorem]{Corollary}

\theoremstyle{definition}
\newtheorem{Def}[Theorem]{Definition}
\newtheorem*{Def*}{Definition}

\newtheorem{Ex}[Theorem]{Example}

\newcommand{\kk}{k}            
\newcommand{\mmod}[1]{\mathrm{mod}(#1)}

\newcommand{\cmod}[1]{\mathrm{MCM}(#1)}

\newcommand{\Gl}[1]{\mathrm{GL}(#1)}   
\newcommand{\Sl}[1]{\mathrm{SL}(#1)}

\newcommand{\mm}{\mathfrak{m}}
\newcommand{\NN}{\mathbb{N}}
\newcommand{\ZZ}{\mathbb{Z}}

\newcommand{\RR}{\mathbb{R}}

\newcommand{\Ac}{\mathcal{A}}

\newcommand{\set}[1]{\{#1\}}
\newcommand{\with}{\,\colon\,}

\newcommand{\card}[1]{|#1|}

\DeclareMathOperator{\Syz}{Syz}
\DeclareMathOperator{\Sym}{Sym}

\DeclareMathOperator{\rank}{rank}
\DeclareMathOperator{\Hom}{Hom}
\DeclareMathOperator{\freerank}{frk}

\DeclareMathOperator{\rk}{rank}

\DeclareMathOperator{\Vol}{Vol}

\DeclareMathOperator{\mult}{mult}
\DeclareMathOperator{\chara}{char}

\title{The Symmetric signature of cyclic quotient singularities}

\author{Alessio Caminata and Lukas Katth\"an}

\address{{\small Alessio Caminata, Institut de Math\'{e}matiques, Universit\'{e} de Neuch\^{a}tel\\Rue Emile-Argand 11, CH-2000 Neuch\^{a}tel, Switzerland}}
\email{{\small alessio.caminata@unine.ch}}

\address{{\small Lukas Katth\"an, Institut f\"ur Mathematik, Goethe-Universit\"at Frankfurt, Robert-Mayer-Str. 10, 60325 Frankfurt am Main, Germany}}
\email{{\small katthaen@math.uni-frankfurt.de}}

\subjclass[2010]{Primary: 13A35; Secondary: 13A50, 52B20.}

\keywords{F-signature; quotient singularity; toric ring; Auslander correspondence.}

\begin{document}

\begin{abstract}
The symmetric signature is an invariant of local domains which was recently introduced by Brenner and the first author in an attempt to find a replacement for the $F$-signature in characteristic zero.
In the present note we compute the symmetric signature for two-dimensional cyclic quotient singularities, i.e. invariant subrings $\kk\llbracket u,v\rrbracket^G$ of rings of formal power series under the action of a cyclic group.
Equivalently, these rings arise as the completions (at the irrelevant ideal) of two-dimensional normal toric rings.
We show that for this class of rings the symmetric signature coincides with the $F$-signature.
\end{abstract}

\maketitle

\section*{Introduction}
The \emph{$F$-signature} \cite{HL02} or \emph{minimal relative Hilbert-Kunz multiplicity} \cite{WY04} is an important invariant of local rings of positive characteristic.
For a Noetherian local domain $(R,\mathfrak{m},\kk)$ of positive characteristic with algebraically closed residue field $\kk$, the $F$-signature of $R$ is defined as the limit
\[
s(R):=\lim_{e\rightarrow+\infty}\frac{\freerank_R{^e\!R}}{\rank_R{^e\!R}},
\]
where $^e\!R$ denotes the ring $R$ viewed as $R$-module with multiplicative structure twisted via the $e$-th iterated Frobenius homomorphism.
The notation $\rank_R(-)$ stands for the usual rank of an $R$-module, and we have that $\rank_R{^e\!R}=p^{de}$, where $p$ is the characteristic of the ring, and $d$ the Krull dimension.
The number $\freerank_R{^e\!R}$ is the free rank of $^e\!R$, that is the maximum number of free direct $R$-summands of $^e\!R$ (see Definition \ref{definitionfreerank}).
The $F$-signature exists for every $F$-finite ring \cite{Tuc12}, and it is equal to $1$ if and only if the ring is regular \cite{WY00}. In general, the $F$-signature provides delicate information concerning the singularities of the ring.

For some classes of rings, the $F$-signature does not depend on the characteristic (in a certain sense).
This motivated Brenner and the first author to introduce the \emph{symmetric signature} as an analogous invariant also for rings of characteristic zero \cite{BC16}. 
They consider $q$-th reflexive symmetric powers $\mathcal{S}^q$ of the top-dimensional syzygy module of the residue field and they study the asymptotic behaviour of the ratio $\freerank_R/\rank_R$ for this class of modules, rather than $^e\!R$ as for the $F$-signature.
If the limit
\[ 
\lim_{N\rightarrow+\infty} \frac{\sum_{q=0}^N\freerank_R\mathcal{S}^q}{\sum_{q=0}^N\rank_R\mathcal{S}^q}
\]
exists, it is called \emph{symmetric signature} of $R$ and denoted by $s_{\sigma}(R)$ (see Definition \ref{defsymmetricsignature}).

We consider the symmetric signature for a special class of rings. A \emph{quotient singularity} is a ring of the form $R = \kk\llbracket x_1,\dots,x_n\rrbracket^G$ where $G$ is a small finite subgroup of $\Gl{n,\kk}$ whose order is not a multiple of the characteristic of $\kk$.
It is known that the $F$-signature of a quotient singularity is $\frac{1}{\card{G}}$, cf. \cite{WY04}.
Brenner and the first author showed that the symmetric signature takes the same value in the case $n = 2$ and $G \subset \Sl{n,\kk}$, i.e. for two-dimensional \emph{special} quotient singularities.

The main goal of this paper is to extend this result to two-dimensional \emph{cyclic} quotient singularities, i.e. the case $n = 2$ and $G \subset \Gl{2,\kk}$ is cyclic.
In fact we prove a slightly stronger statement, namely we compute the \emph{generalized symmetric signature} of these rings (see \Cref{def:gensymsig} below).

The structure of the paper is the following. 
In Section \ref{sectionsymmetricsignature} we review the definition of symmetric signature and its generalized version.
In Section \ref{section:quotientsingularities} we state our main theorem about the generalized symmetric signature of cyclic quotient singularities (Theorem \ref{thm:gensymsig}), and we use the Auslander correspondence to reduce it to a problem concerning group representations.
This is proved in Section \ref{section:representations} by counting lattice points in a certain polyhedron.

\section{The Symmetric signature}\label{sectionsymmetricsignature}

\par Let $(R,\mathfrak{m},\kk)$ be a Noetherian local domain of Krull dimension $d$, we recall the definition of symmetric signature from \cite{BC16}.
Given a finitely generated $R$-module $M$, we denote by $\rk_R M$ the rank of $M$, and we introduce a notation for the number of free summands.

\begin{Def}\label{definitionfreerank}
	The \emph{free rank} of $M$ is
	\[
		\freerank_R(M):=\max\{n: \ \exists \text{ a split surjection }\varphi: M\twoheadrightarrow R^n\}. 
	\]
\end{Def}
If the module $M$ has finite rank, we can write it as $M\cong R^{\freerank_R(M)}\oplus N$, where the module $N$ has no free direct $R$-summands. It follows that in general $\freerank_R{M}\leq\rank_RM$.

\par For a natural number $q$, we denote by $\Sym^q_R(M)$ the $q$-th symmetric powers of the module $M$, and by $M^{*}:=\Hom_R(M,R)$ the dual of $M$.
The $R$-module $M^{**}$ is always reflexive and is called reflexive hull of $M$. The functor which sends $M$ to $\Sym^q_R(M)^{**}$ is called $q$-th reflexive symmetric powers.

\par We consider the top-dimensional syzygy module of the residue field $\Syz^d_R(\kk)$. If the ring $R$ is Cohen-Macaulay, then $\Syz^d_R(\kk)$ is a maximal Cohen-Macaulay (MCM for short) module. Let $q$ be a natural number, we apply $q$-th reflexive symmetric powers to $\Syz_R^d(\kk)$, and we obtain a family of reflexive $R$-modules 
\[
	\mathcal{S}^q:=\left(\Sym_R^q\big(\Syz^d_R(\kk)\big)\right)^{**}.
\]
\begin{Def}\label{defsymmetricsignature}
	The number 
	\[ 
		s_{\sigma}(R) := \lim_{N\rightarrow+\infty} \frac{\sum_{q=0}^N\freerank_R\mathcal{S}^q}{\sum_{q=0}^N\rank_R\mathcal{S}^q}
	\] 
	is called \emph{symmetric signature} of $R$, provided the limit exists.
\end{Def}

\begin{Ex}
	If $R$ is a regular ring, then the modules $\mathcal{S}^q$ are $R$-free. 
	It follows that $\freerank_R\mathcal{S}^q=\rank_R\mathcal{S}^q$ for all $q$, and consequently $s_{\sigma}(R)=1$.
\end{Ex}

Assume now that the category of finitely generated reflexive $R$-modules is a Krull-Schmidt category.
In this case we can also define a generalized symmetric signature.
For reflexive $R$-modules $M, M'$ where $M$ is indecomposable, we denote by
\[ \mult(M,M') \]
the \emph{multiplicity of $M$ in $M'$}, i.e. the number of summands isomorphic to $M$ in a decomposition of $M'$ into indecomposable modules.
\begin{Def}\label{def:gensymsig}
	The number 
	\[
		s_{\sigma}(R,M) := \lim_{N\rightarrow+\infty} 
		\frac{\sum_{q=0}^N \mult(M, \mathcal{S}^q)}{\sum_{q=0}^N \rank_R\mathcal{S}^q}
	\]
	is called \emph{generalized symmetric signature of $R$ with respect to $M$}, provided the limit exists.
\end{Def}

For further examples and properties of the symmetric signature, we refer to the paper \cite{BC16}.
We mention that there is also an alternative version of the symmetric signature, called \emph{differential symmetric signature}, where the module $\Syz^d_R(\kk)$ is replaced by the module $\Omega_{R/\kk}^{**}$ of Zariski differentials of $R$ over $\kk$.
The two symmetric signatures share many properties, but we will not consider the differential variant in this paper.

\section{Quotient singularities}\label{section:quotientsingularities}
Let $\kk$ be an algebraically closed field $\kk$, and let $G\subseteq\Gl{2,\kk}$ be a finite group, whose order is not a multiple of the characteristic of $\kk$.
Assume further that $G$ is small, i.e. it contains no pseudo-reflections.

The group $G$ acts on $S=\kk\llbracket u,v\rrbracket$ via linear changes of variables, and we denote by $R:=S^G$ the invariant ring under this action.
The invariant ring $R$ is called \emph{quotient singularity} and has the following properties:
it is a Noetherian local domain of dimension $2$, normal, Cohen-Macaulay, complete, and an isolated singularity (see e.g. \cite[Theorem 4.1]{BD08}).
In particular, $R$ has finite Cohen-Macaulay type, i.e. there exist only finitely many isomorphism classes of maximal Cohen-Macaulay $R$-modules.
We denote by $\cmod{R}$ the category of MCM $R$-modules, which is a Krull-Schmidt category since $R$ is complete.
The ring $R$ is Gorenstein if and only if $G\subseteq\Sl{2,\kk}$ (see \cite{Wat74a, Wat74b}), and in this case $R$ is called \emph{special quotient singularity}.
If $G$ is a cyclic group, then $R$ is called \emph{cyclic quotient singularity}.
Note that cyclic quotient singularities are exactly the completion of two-dimensional normal toric rings at their irrelevant ideal.
(If $R$ is a cyclic quotient singularity, then we may assume that $G$ acts diagonally, so $R$ is the completion of a subalgebra of $\kk[u,v]$ which is generated by monomials and hence toric.
On the other hand, if $R$ is a two-dimensional normal ring, then the so-called standard map \cite[p. 55]{BG} yields a presentation of $R$ as invariant subring of $\kk[u,v]$ under the action of some group $G$.)

Hashimoto and Nakajima \cite{HN15} proved that if $\kk$ has prime characteristic and $M$ is an indecomposable MCM $R$-module, then the generalized $F$-signature of $R$ with respect to $M$ is
\[
s(R,M)=\frac{\rank_R M}{\card{G}}.
\]

Brenner and the first author \cite{BC16} proved that the generalized symmetric signature has the same value in the case of special quotient singularities.
The main result of the present paper is the corresponding statement for cyclic quotient singularities:

\begin{Theorem}\label{thm:gensymsig}
	Let $\kk$ be an algebraically closed field and let $G \subset \Gl{2,\kk}$ be a small, finite and cyclic group, such that $\chara\kk$ does not divide $\card{G}$. 
	Let $R = \kk\llbracket u,v\rrbracket^G$ be the corresponding cyclic quotient singularity and let $M$ be an indecomposable MCM $R$-module. 
	Then the generalized symmetric signature of $R$ with respect to $M$ equals
	\[
		s_{\sigma}(R,M)=\frac{1}{\card{G}}.
	\]
\end{Theorem}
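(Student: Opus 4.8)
The strategy is to transport the whole computation to the representation theory of $G$ via the McKay--Auslander correspondence, and then to evaluate the resulting multiplicities by an equidistribution argument. Write $S=\kk\llbracket u,v\rrbracket$. Recall that the indecomposable objects of $\cmod{R}$ are exactly the modules $M_\rho:=(S\otimes_\kk\rho)^G$, where $\rho$ runs over the irreducible $\kk$-representations of $G$, and that $\rank_R M_\rho=\dim_\kk\rho$. Since $G$ is cyclic and $\kk$ is algebraically closed, every $\rho$ is a character, so every indecomposable MCM module has rank $1$; in particular the given module is $M\cong M_{\chi_0}$ for a single character $\chi_0$. It therefore suffices to control, for each character, how often it occurs in the modules $\mathcal{S}^q$.

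The second ingredient is that reflexive operations on $R$-modules mirror the analogous operations on representations. Here the smallness of $G$ is essential: it is equivalent to $G$ acting freely on $\kk^2\setminus\{0\}$, so that the quotient map from the punctured spectrum of $S$ onto the regular locus $U$ of $\operatorname{Spec}(R)$ is an \'etale $G$-torsor. Hence a reflexive $R$-module restricts on $U$ to a $G$-equivariant bundle, that is, to a representation, and this assignment carries reflexive tensor and symmetric powers to the corresponding representation-theoretic operations (two reflexive modules that agree on $U$, whose complement has codimension $2$, are isomorphic). As $\Syz_R^2(\kk)$ is MCM, hence reflexive, it corresponds to a representation $V$ with $\dim_\kk V=r:=\rank_R\Syz^2_R(\kk)$, and one gets
\[
	\mathcal{S}^q=\bigl(\Sym^q_R\Syz^2_R(\kk)\bigr)^{**}\cong M_{\Sym^q V},
	\qquad
	\mult(M_{\chi_0},\mathcal{S}^q)=\langle\chi_0,\Sym^q V\rangle .
\]

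Identifying $V$ precisely enough to run the count is where I expect the main difficulty to lie. Using the $G$-equivariant Koszul resolution of $\kk$ over $S$, whose $G$-invariants give the exact sequence $0\to M_{\det W}\to M_W\to\mm_R\to0$ (with $W$ the natural two-dimensional representation on $\langle u,v\rangle$), one can pin down the characters occurring in $V$. What ultimately matters for the value of the limit is the \emph{non-degeneracy} assertion that $V$ is faithful, equivalently that the characters occurring in $V$ generate $\mathrm{Cl}(R)\cong\ZZ/\card{G}$; this should again follow from smallness, since already $W$ contributes a character generating the class group. I regard establishing this identification and the faithfulness of $V$ as the heart of the proof.

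Granting it, the conclusion follows by an elementary lattice-point count. Diagonalising, write $V\cong\bigoplus_{i=1}^{r}\chi^{c_i}$ and identify the character group with $\ZZ/\card{G}$ and $\chi_0$ with a residue $c_0$. Then $\langle\chi_0,\Sym^q V\rangle$ is the number of $m\in\NN^{r}$ with $\sum_i m_i=q$ and $\sum_i c_i m_i\equiv c_0\pmod{\card{G}}$, so summing over $q\le N$ turns the numerator and denominator of $s_\sigma(R,M)$ into, respectively, the number of lattice points of the simplex $\Delta_N=\{m\in\NN^{r}\colon\sum_i m_i\le N\}$ lying in a fixed coset of the sublattice $L=\ker\bigl(m\mapsto\sum_i c_i m_i\bigr)$, and the total number of lattice points of $\Delta_N$. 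Faithfulness of $V$ forces $\gcd(c_1,\dots,c_r,\card{G})=1$, so $L$ has index exactly $\card{G}$; since $\Delta_N$ has volume $\sim N^{r}/r!$ while its boundary carries only $O(N^{r-1})$ lattice points, the chosen coset meets $\Delta_N$ in $\tfrac{1}{\card{G}}\#(\Delta_N\cap\ZZ^{r})+O(N^{r-1})$ points. Dividing, the ratio tends to $\tfrac{1}{\card{G}}$, which is the asserted value.
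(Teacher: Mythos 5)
Your overall strategy coincides with the paper's: transport everything to representations of $G$ via the Auslander correspondence, reduce to counting lattice points of a dilated simplex lying in a coset of a sublattice of index $\card{G}$, and conclude by a volume/equidistribution argument. That part of your write-up is correct and essentially identical to the paper (which phrases the final count for a general finite abelian group and computes the lattice index via an elementary duality lemma, but the content is the same).

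The genuine gap is exactly the one you flag yourself: the faithfulness of $V=\Ac'(\Syz^2_R(\kk))$ is asserted, not proven, and the route you sketch does not lead to it. Taking $G$-invariants of the Koszul resolution of $\kk$ over $S$ produces the sequence $0\to M_{\det W}\to M_W\to\mm_R\to 0$, but this is not a free resolution over $R$ (the middle term $M_W$ is free only when $W$ is trivial), so it identifies neither $\Syz^2_R(\kk)$ nor the representation $V$; in the non-Gorenstein cyclic case there is no shortcut of this kind, which is precisely why the problem is harder than the $G\subset\Sl{2,\kk}$ case treated in \cite{BC16}. The paper's Lemma \ref{lemmacyclicrepresentationfaithful} closes this gap by a concrete computation: normalize the generator $g$ to act by $u\mapsto\xi u$, $v\mapsto\xi^a v$ with $\gcd(a,n)=1$ (this is where smallness enters), choose monomial generators $p_1=u^n$, $p_2=u^{n-a}v,\dotsc$ of $\mm_R$, lift them to $S$, and observe that the explicit syzygy $s=ve_1-u^ae_2$ is a minimal generator of the $S$-free syzygy module $F=S\otimes_\kk V$ on which $g$ acts by the primitive root of unity $\xi^a$; hence $s$ has trivial stabilizer and $V$ is faithful. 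Without an argument of this kind (or some other proof that the characters occurring in $V$ generate the character group), the index of your lattice $L$ could a priori be a proper divisor of $\card{G}$ and the limit would come out wrong.
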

To prove this result, we use the Auslander correspondence to reduce it into a group theoretical statement (see \Cref{cor:auslander,lemmacyclicrepresentationfaithful}), which is then proven in \Cref{theoremcyclicrepresentation}.
If we choose $M=R$ in the previous theorem, we immediately get the following corollary.

\begin{Cor}\label{corollarycyclicsingularity}
	Let $\kk$ be an algebraically closed field and let $G \subset \Gl{2,\kk}$ be a small, finite and cyclic group, such that $\chara\kk$ does not divide $\card{G}$. 
	Then the symmetric signature of the corresponding cyclic quotient singularity $R = \kk\llbracket u,v\rrbracket^G$ is
	\[
	s_{\sigma}(R)=\frac{1}{\card{G}}.
	\]
\end{Cor}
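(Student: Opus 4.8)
The plan is to prove Theorem~\ref{thm:gensymsig} via the Auslander correspondence, which for a two-dimensional quotient singularity $R = \kk\llbracket u,v\rrbracket^G$ sets up a bijection between indecomposable MCM $R$-modules and irreducible representations of $G$. Under this correspondence, the module $\Syz^2_R(\kk)$ should match the defining two-dimensional representation $V = \kk^2$ (the one through which $G$ acts on $S = \kk\llbracket u,v\rrbracket$), and taking reflexive symmetric powers on the module side should translate into taking ordinary symmetric powers $\Sym^q V$ on the representation side. The point is that the functor $M \mapsto \Hom_R(S, M \otimes_R S)^G$ (or its inverse, sending a representation $W$ to $(S \otimes_\kk W)^G$) is an equivalence of categories, and it carries reflexive $R$-modules to $G$-representations in a way that commutes with the relevant operations. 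The first concrete step is therefore to establish, as a lemma, that $\mult(M, \mathcal{S}^q)$ equals the multiplicity of the irreducible representation $\rho_M$ (corresponding to $M$) inside $\Sym^q V$, and that $\rank_R \mathcal{S}^q = \dim_\kk \Sym^q V = q+1$.

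Granting that dictionary, the theorem reduces to a purely representation-theoretic assertion: for the cyclic group $G$ acting on $V = \kk^2$ diagonally via two characters, and for any irreducible representation $\rho$ (necessarily one-dimensional, since $G$ is abelian and $\kk$ algebraically closed),
\[
	\lim_{N \to \infty} \frac{\sum_{q=0}^N \mult(\rho, \Sym^q V)}{\sum_{q=0}^N (q+1)} = \frac{1}{\card{G}}.
\]
Here $\mult(\rho, \Sym^q V)$ counts the monomials $u^a v^b$ with $a + b = q$ whose character matches $\rho$. Since $G$ acts diagonally, say $u \mapsto \zeta^{\alpha} u$ and $v \mapsto \zeta^{\beta} v$ for a primitive $\card{G}$-th root of unity $\zeta$, this multiplicity is the number of lattice points $(a,b)$ in the nonnegative quadrant with $a + b = q$ satisfying a fixed congruence condition $\alpha a + \beta b \equiv c \pmod{\card{G}}$. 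The denominator is $\sum_{q=0}^N (q+1) \sim N^2/2$, which is the total number of lattice points $(a,b)$ with $a,b \ge 0$ and $a+b \le N$.

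So the heart of the argument is a lattice-point count: I would show that the lattice points satisfying the prescribed congruence are asymptotically equidistributed among the $\card{G}$ residue classes, so that the numerator is asymptotic to $\frac{1}{\card{G}} \cdot \frac{N^2}{2}$, yielding the claimed ratio $\frac{1}{\card{G}}$. This is the step the paper defers to \Cref{theoremcyclicrepresentation} (``counting lattice points in a certain polyhedron''), and it is where the main technical work lies: the equidistribution is intuitively clear because the congruence condition cuts out a sublattice of index $\card{G}$, but one must control the boundary/error terms carefully and handle the case where $G$ is \emph{not} faithful on one of the coordinates or where $\gcd(\alpha,\beta,\card{G})$ introduces degeneracies (the smallness hypothesis on $G$ rules out pseudo-reflections but one still needs the action on $V$ to be effectively two-dimensional).

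The main obstacle I anticipate is thus twofold. First, verifying that reflexive symmetric powers of $\Syz^2_R(\kk)$ correspond cleanly to symmetric powers of $V$ under Auslander correspondence requires care: the naive symmetric power $\Sym^q_R M$ is generally not reflexive, so one must check that the reflexive hull is exactly what corresponds to $\Sym^q V$, and that passing to the completion and the $G$-action is compatible with the Krull--Schmidt decomposition (this is presumably the content of \Cref{cor:auslander} and \Cref{lemmacyclicrepresentationfaithful}). Second, the lattice-point equidistribution must be made uniform in the residue class $c$, since the theorem asserts the \emph{same} limit $\frac{1}{\card{G}}$ for every indecomposable $M$ (equivalently every character $\rho$); a clean way to see this is that translating the congruence condition by changing $c$ permutes the relevant lattice points without affecting the leading asymptotics, so all $\card{G}$ classes contribute equally in the limit. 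Once equidistribution is established uniformly in $c$, the factor $\rank_R M$ that appears in the Hashimoto--Nakajima formula for the $F$-signature collapses to $1$ here precisely because each indecomposable MCM module over a cyclic (hence abelian) quotient singularity has rank one, matching the one-dimensionality of the irreducible representations.
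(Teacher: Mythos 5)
Your overall strategy (Auslander correspondence plus equidistribution of lattice points in a dilated simplex) is the same as the paper's, and the corollary does indeed follow from \Cref{thm:gensymsig} by taking $M = R$, since $\freerank_R(\mathcal{S}^q) = \mult(R,\mathcal{S}^q)$. However, there is a genuine gap at the very first step of your reduction: the identification of $\Syz^2_R(\kk)$ with the defining two-dimensional representation $\kk^2$ is false in general. The module $\Syz^2_R(\kk)$ is the kernel of a minimal presentation $R^\mu \to \mm_R$, where $\mu$ is the embedding dimension of $R$, so it has rank $\mu - 1$; under the Auslander correspondence it therefore corresponds to a representation of dimension $\mu - 1$, not $2$. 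For instance, for $G$ generated by $\xi\cdot\mathrm{id}$ with $\xi$ a primitive $n$-th root of unity (the cone over the rational normal curve), one has $\mu = n+1$ and the relevant representation is $n$-dimensional. Consequently your lattice-point count is set in the wrong ambient dimension: the denominator is not $\sum_q (q+1) \sim N^2/2$ but grows like $N^{\mu-1}/(\mu-1)!$, and the numerator counts points of a coset of a sublattice of $\ZZ^{\mu-1}$ of index $\card{G}$ inside the dilated $(\mu-1)$-simplex. The equidistribution argument survives this correction (this is exactly the paper's \Cref{theoremcyclicrepresentation}, stated for arbitrary $\nu$), so the final answer $1/\card{G}$ is unaffected, but as written you are computing the symmetric powers of the wrong module.

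The second, related gap is faithfulness. For the defining representation $\kk^2$ faithfulness is automatic since $G \subset \Gl{2,\kk}$, but for the actual representation $V = \Ac'(\Syz^2_R(\kk))$ it is not, and it is precisely the hypothesis needed to make the congruence condition cut out a sublattice of index exactly $\card{G}$ (otherwise some residue classes would receive no lattice points at all and the limit would depend on $M$). The paper's \Cref{lemmacyclicrepresentationfaithful} proves this by exhibiting an explicit first syzygy $s = v e_1 - u^a e_2$ of the monomial generators $u^n, u^{n-a}v$ of $\mm_R$, checking via the structure of monomial ideals in two variables that $s$ is a minimal generator of the syzygy module, and observing that $g\cdot s = \xi^a s$ with $a$ coprime to $n$ (coprimality being exactly where smallness of $G$ enters). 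Your proposal flags ``degeneracies from $\gcd(\alpha,\beta,\card{G})$'' as a worry, but for the correct module this concern must be resolved by an argument of this kind rather than by the smallness hypothesis acting on the coordinates of $\kk^2$.
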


\subsection{The Auslander correspondence}
There is a one-to-one correspondence, called Auslander correspondence, between irreducible $\kk$-representations of $G$ and indecomposable MCM $R$-modules.
More precisely, we a have a functor $\Ac:\mmod{\kk[G]}\rightarrow\cmod{R}$ called \emph{Auslander functor}, from the category $\mmod{\kk[G]}$ of linear $\kk$-representations of $G$ to the category of MCM $R$ modules, given by $\Ac(V):=(S\otimes_{\kk}V)^G$.
This functor is not an equivalence of categories, but it has a right adjoint $\Ac'$ given by $\Ac'(N):=(S\otimes_RN)^{**}\otimes_{S}\kk$, for every MCM $R$-module $N$.

We collect some properties of the functors $\Ac$ and $\Ac'$ in the following theorem.

\begin{Theorem}[Auslander correspondence]\label{Auslandertheorem}
	The functors $\Ac:\mmod{\kk[G]}\rightarrow\cmod{R}$ and  $\Ac':\cmod{R}\rightarrow\mmod{\kk[G]}$ have the following properties.
	\begin{compactenum}[1)]
		\item $\Ac(V)$ is indecomposable in $\cmod{R}$ if and only if $V$ is an irreducible representation.
		\item $\Ac(\Ac'(M)) \cong M$ for every $M \in \cmod{R}$.
		\item $\Ac'(\Ac(V)) \cong V$ for every $V \in\mmod{\kk[G]}$.
		\item $\rank_R\Ac(V)=\dim_{\kk}V$ for every $\kk$-representation $V$.
		\item If $V, V' \in \mmod{\kk[G]}$ and $V'$ is irreducible, then 
		\[\mult(V', V) = \mult(\Ac(V'), \Ac(V)).\]
	\end{compactenum}
\end{Theorem}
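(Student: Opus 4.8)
The plan is to deduce all five statements from the standard structure theory of two-dimensional quotient singularities, starting from three basic inputs that I would record first. Since $\chara\kk \nmid \card{G}$, the group algebra $\kk[G]$ is semisimple, so every object of $\mmod{\kk[G]}$ is a direct sum of irreducibles, the indecomposables are exactly the irreducible representations, and the Reynolds operator $\tfrac{1}{\card{G}}\sum_{g\in G}g$ exhibits the invariant functor $(-)^G$ as exact and as splitting off invariants as a direct summand. Secondly, $S=\kk\llbracket u,v\rrbracket$ is a maximal Cohen--Macaulay $R$-module, and since $S\otimes_\kk V\cong S^{\dim_\kk V}$ is $S$-free, the module $\Ac(V)=(S\otimes_\kk V)^G$ is a $G$-invariant summand of an $S$-free module, hence reflexive and MCM over $R$. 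Finally, because $G$ is small the action is free away from the closed point, so $R\hookrightarrow S$ is \'etale in codimension one; as $S$ is a two-dimensional normal domain, a morphism of reflexive $S$-modules that is an isomorphism in codimension one is an isomorphism, and reflexive $S$-modules agreeing in codimension one are isomorphic.

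I would prove (4) first. Passing to the fraction fields $\field{R}\subset\field{S}$ gives a Galois extension with group $G$, and the diagonal action makes $\field{S}\otimes_\kk V$ a semilinear $G$-representation over $\field{S}$; by Galois descent the functor $(-)^G$ identifies these with $\field{R}$-vector spaces of the same $\field{S}$-dimension. Since $(-)^G$ is exact and localization is flat, $\Ac(V)\otimes_R\field{R}\cong(\field{S}\otimes_\kk V)^G$, whence $\rk_R\Ac(V)=\dim_{\field{R}}(\field{S}\otimes_\kk V)^G=\dim_\kk V$. Next I would establish the one isomorphism driving everything else, the $G$-equivariant isomorphism of reflexive $S$-modules
\[
	\bigl(S\otimes_R\Ac(V)\bigr)^{**}\;\cong\;S\otimes_\kk V,
\]
obtained from the multiplication map $S\otimes_R(S\otimes_\kk V)^G\to S\otimes_\kk V$, which is $G$-equivariant and an isomorphism in codimension one (here the \'etale-in-codimension-one property enters), hence an isomorphism after reflexive hull by the principle above. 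Tensoring with $\kk$ over $S$, and using that $G$ fixes $\mm_S$ and acts trivially on the residue field, yields $\Ac'(\Ac(V))=(S\otimes_R\Ac(V))^{**}\otimes_S\kk\cong(S\otimes_\kk V)\otimes_S\kk\cong V$ as $\kk[G]$-modules, which is statement (3).

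Statement (1) I would then get from the adjunction together with (3): the right-adjoint property gives $\Hom_R(\Ac(V),\Ac(V))\cong\Hom_{\kk[G]}(V,\Ac'\Ac(V))\cong\Hom_{\kk[G]}(V,V)$, so $\Ac$ induces an isomorphism $\mathrm{End}_R\Ac(V)\cong\mathrm{End}_{\kk[G]}(V)$. As $\Ac$ is additive and $\rk_R\Ac(V)=\dim_\kk V$ forbids vanishing summands, $\Ac(V)$ is indecomposable exactly when $V$ is; by semisimplicity of $\kk[G]$ and Schur's lemma over the algebraically closed field $\kk$ this happens precisely when $V$ is irreducible. For (5) I would decompose $V=\bigoplus_{V'}V'^{\,\oplus\mult(V',V)}$ into irreducibles, apply $\Ac$, and combine (1) with the injectivity of $\Ac$ on isomorphism classes (which follows by applying $\Ac'$ and invoking (3)); Krull--Schmidt uniqueness in $\cmod{R}$ then gives $\mult(\Ac(V'),\Ac(V))=\mult(V',V)$.

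The remaining statement (2) is the genuine content, and the step I expect to be the main obstacle, since it amounts to essential surjectivity: every indecomposable MCM $R$-module must be isomorphic to some $\Ac(V)$. Here I would invoke Auslander's theorem that $\mathrm{End}_R(S)\cong S\#G$ is the skew group algebra and has finite global dimension equal to $2$, so that $S$ is a representation generator of $\cmod{R}$; concretely, the isotypical decomposition of the $G$-module $S$ exhibits each $\Ac(V')$ as a direct summand of $S$, and Auslander's result guarantees that the indecomposable summands occurring in $S$ exhaust all indecomposable MCM modules. Given an arbitrary $M\in\cmod{R}$, writing it as a sum of such summands and using (3) to identify $\Ac'(M)$ with the corresponding representation then yields $\Ac(\Ac'(M))\cong M$. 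All the analytic difficulty is concentrated in this appeal to the finite-global-dimension/representation-generator property; the other four statements are essentially formal consequences of Galois descent, the codimension-one comparison, and the Krull--Schmidt property.
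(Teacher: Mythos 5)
The paper itself gives no proof of this theorem: it is stated as a collection of known facts (the Auslander correspondence), with the reader referred to Leuschke--Wiegand, Yoshino, and the first author's thesis. Your plan correctly reconstructs essentially the standard argument from those sources --- smallness giving $R \hookrightarrow S$ \'etale in codimension one, the key $G$-equivariant isomorphism $(S \otimes_R \Ac(V))^{**} \cong S \otimes_\kk V$ of reflexive $S$-modules (correctly applied via a map that is an isomorphism in codimension one, not mere abstract agreement), the adjunction plus semisimplicity of $\kk[G]$ for indecomposability, Krull--Schmidt in $\cmod{R}$ for the multiplicity statement, and Herzog's/Auslander's theorem that the indecomposable MCM $R$-modules are exactly the summands of $S$ for essential surjectivity --- so it is correct and takes the same route as the literature the paper cites.
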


We recall also that for a two-dimensional quotient singularity finitely generated MCM $R$-modules coincide with finitely generated reflexive $R$-modules,
and the indecomposable MCM $R$-modules are exactly the $R$-direct summands of $S$ by a result of Herzog \cite{Her78}.

\par Brenner and the first author \cite{BC16} proved  that the Auslander functor commutes with reflexive symmetric powers.
Note that reflexive symmetric powers of a $\kk$-representation are just ordinary symmetric powers, since every vector space is canonically isomorphic to its double dual.

\begin{Theorem}[\cite{BC16}, Theorem 2.12]\label{symcommuteswithauslandertheorem}
	For every $\kk$-representation $V$ of $G$, it holds that
	\[\Sym_R^{q}(\Ac(V))^{**}\cong\Ac(\Sym^{q}_{\kk}(V)). \]
\end{Theorem}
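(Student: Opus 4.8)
The plan is to produce an explicit natural comparison morphism and then to show it becomes an isomorphism after applying the reflexive-hull functor, by checking that it is already an isomorphism in codimension one. First I would construct a natural $R$-linear map
$\phi\colon \Sym_R^q(\Ac(V)) \to \Ac(\Sym^q_{\kk}(V))$. Since $\Ac(V)=(S\otimes_{\kk}V)^G$ is an $R$-submodule of the free $S$-module $S\otimes_{\kk}V$, the universal property of symmetric powers gives a map $\Sym_R^q(\Ac(V)) \to \Sym_S^q(S\otimes_{\kk}V)$, and the target is canonically $S\otimes_{\kk}\Sym^q_{\kk}(V)$ because symmetric powers of a free module are computed by base change. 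As products of $G$-invariant elements are again $G$-invariant, the image of $\phi$ lands in $(S\otimes_{\kk}\Sym^q_{\kk}V)^G=\Ac(\Sym^q_{\kk}V)$, which defines $\phi$.

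Next I would reduce the theorem to a codimension-one statement. The target $\Ac(\Sym^q_{\kk}V)$ is MCM: since $\chara\kk\nmid\card{G}$, the Reynolds operator realizes $(-)^G$ as a direct summand, so $\Ac(\Sym^q_{\kk}V)$ is a direct $R$-summand of the MCM module $S\otimes_{\kk}\Sym^q_{\kk}V$; over the two-dimensional normal ring $R$, MCM modules are reflexive. Hence it suffices to prove that $\phi^{**}$ is an isomorphism. For a morphism into a reflexive module over a normal domain, this holds as soon as $\phi_{\mathfrak{p}}$ is an isomorphism for every height-one prime $\mathfrak{p}$ of $R$, because the reflexive target is recovered as the intersection of its localizations at the height-one primes.

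It then remains to verify the local isomorphism at each height-one prime $\mathfrak{p}$. Because $R$ is an isolated singularity, $R_{\mathfrak{p}}$ is a regular local ring of dimension one lying in the smooth locus, over which the $G$-action on $\operatorname{Spec} S$ is free; consequently $R_{\mathfrak{p}}\to S_{\mathfrak{p}}:=S\otimes_R R_{\mathfrak{p}}$ is an \'etale $G$-Galois extension. Since $\chara\kk\nmid\card{G}$, localization commutes with $(-)^G$, giving $\Ac(W)_{\mathfrak{p}}=(S_{\mathfrak{p}}\otimes_{\kk}W)^G$ for every representation $W$; in particular both $\Sym^q_{R_{\mathfrak{p}}}(\Ac(V)_{\mathfrak{p}})$ and $\Ac(\Sym^q_{\kk}V)_{\mathfrak{p}}$ are free of rank $\dim_{\kk}\Sym^q_{\kk}V$. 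The map $\phi_{\mathfrak{p}}$ is an isomorphism by Galois descent: after the faithfully flat base change $R_{\mathfrak{p}}\to S_{\mathfrak{p}}$ the invariant module $(S_{\mathfrak{p}}\otimes_{\kk}V)^G$ trivializes to the free module $S_{\mathfrak{p}}\otimes_{\kk}V$, so that the functor $W\mapsto (S_{\mathfrak{p}}\otimes_{\kk}W)^G$ is symmetric monoidal and therefore commutes with symmetric powers.

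I expect the main obstacle to be precisely this codimension-one identification: one must argue carefully that forming $G$-invariants commutes with symmetric powers over the \'etale locus. This rests on the freeness of the $G$-action away from $\mm$ (equivalently, on $R$ being an isolated singularity) and on faithfully flat descent along the Galois cover $R_{\mathfrak{p}}\to S_{\mathfrak{p}}$; the hypothesis $\chara\kk\nmid\card{G}$ is used twice, both for the exactness and summand property of $(-)^G$ and for the \'etaleness of the cover.
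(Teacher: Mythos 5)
The paper only cites this statement from \cite{BC16} without reproducing a proof, but your argument is correct and follows essentially the same strategy as the original: construct the natural multiplication map $\Sym_R^q(\Ac(V))\to\Ac(\Sym^q_{\kk}(V))$, observe that the target is MCM (hence reflexive) via the Reynolds operator, and verify the map is an isomorphism in codimension one, where $R\subseteq S$ is \'etale precisely because $G$ is small and therefore acts freely away from the closed point. I see no gaps to report.
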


For further details, and a review of the Auslander correspondence the reader may consult the books of Leuschke and Wiegand \cite{LW12}, and of Yoshino \cite{Yos90}, or the first author's Ph.D. thesis \cite{Cam16}.

In the present paper, we will use the following consequence of \Cref{symcommuteswithauslandertheorem}.
\begin{Cor}\label{cor:auslander}
	Let $M, M'$ be MCM $R$-modules and assume that $M'$ is indecomposable.
	Then the following holds:
	\begin{enumerate}
		\item $\rank_R \Sym_R^q(M)^{**} = \dim_\kk \Sym_\kk^q(\Ac'(M))$, and
		\item $\mult(M', \Sym_R^q(M)^{**}) = \mult( \Ac'(M'),  \Sym_\kk^q(\Ac'(M)))$.
	\end{enumerate}
\end{Cor}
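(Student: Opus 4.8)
The plan is to derive both identities purely formally from the two results already established: the Auslander correspondence (\Cref{Auslandertheorem}) and the commutation of reflexive symmetric powers with the Auslander functor (\Cref{symcommuteswithauslandertheorem}). The central observation is that every MCM module can be recovered from its associated representation, since $M \cong \Ac(\Ac'(M))$ by part 2) of \Cref{Auslandertheorem}. This lets me transport the whole computation to the representation side, where symmetric powers are ordinary (non-reflexive) and ranks are just dimensions.

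Concretely, I would set $V := \Ac'(M)$ and rewrite $M \cong \Ac(V)$. Applying \Cref{symcommuteswithauslandertheorem} then gives the key isomorphism
\[
	\Sym_R^q(M)^{**} \cong \Ac\bigl(\Sym_\kk^q(V)\bigr).
\]
For part (1), I take ranks of both sides and invoke part 4) of \Cref{Auslandertheorem}, namely $\rank_R \Ac(W) = \dim_\kk W$ applied to $W = \Sym_\kk^q(V)$, to obtain $\rank_R \Sym_R^q(M)^{**} = \dim_\kk \Sym_\kk^q(V) = \dim_\kk \Sym_\kk^q(\Ac'(M))$, which is the desired equality.

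For part (2), I additionally set $V' := \Ac'(M')$, so that $M' \cong \Ac(V')$ by part 2). Here I would apply part 5) of \Cref{Auslandertheorem} to the pair $(V', \Sym_\kk^q(V))$, giving $\mult(V', \Sym_\kk^q(V)) = \mult(\Ac(V'), \Ac(\Sym_\kk^q(V)))$. Combining this with the isomorphisms $M' \cong \Ac(V')$ and $\Sym_R^q(M)^{**} \cong \Ac(\Sym_\kk^q(V))$ from above yields $\mult(M', \Sym_R^q(M)^{**}) = \mult(\Ac'(M'), \Sym_\kk^q(\Ac'(M)))$.

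The argument is mostly bookkeeping, assembling the cited statements in the right order; the only point that genuinely requires attention is verifying the hypothesis of part 5), which demands that its first argument be \emph{irreducible}. This is where the indecomposability assumption on $M'$ enters: since $M' \cong \Ac(V')$ is indecomposable, the ``if and only if'' in part 1) of \Cref{Auslandertheorem} forces $V' = \Ac'(M')$ to be irreducible, so part 5) is applicable. Making this deduction explicit is the main (and essentially the only) subtlety in the proof.
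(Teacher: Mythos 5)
Your proposal is correct and follows essentially the same route as the paper: both rewrite $M \cong \Ac(\Ac'(M))$, apply \Cref{symcommuteswithauslandertheorem} to get $\Sym_R^q(M)^{**} \cong \Ac(\Sym_\kk^q(\Ac'(M)))$, and then invoke parts 4) and 5) of \Cref{Auslandertheorem}. Your explicit verification that $\Ac'(M')$ is irreducible (via part 1) and the indecomposability of $M'$) is a detail the paper leaves implicit, but it is the same argument.
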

\begin{proof}
	The first item follows from the fact that 
	\[ \Sym_R^q(M)^{**} \cong \Sym_R^q(\Ac(\Ac'(M)))^{**} \cong \Ac(\Sym_\kk^q(\Ac'(M))) \]
	and part 4) of \Cref{Auslandertheorem}.
	For the second equality, we compute that
	\begin{align*}
		\mult(M', \Sym_R^q(M)^{**}) &= \mult(\Ac(\Ac'(M')), \Ac(\Sym_\kk^q(\Ac'(M)))) \\
		&= \mult(\Ac'(M'), \Sym_\kk^q(\Ac'(M))),
	\end{align*}
	where we used the last part of \Cref{Auslandertheorem}.
\end{proof}

By this corollary, to prove \Cref{thm:gensymsig} we need to consider the multiplicities of irreducible representation of $G$ inside the symmetric powers of a given representation of $G$.
This will be done in the next section.
Before we turn to this, we need to show that the relevant representation is faithful:
\begin{Lemma}\label{lemmacyclicrepresentationfaithful}
	Let $G$ and $R$ as in \Cref{thm:gensymsig} and assume that $G$ is cyclic.
	Then the representation of $G$ on $\Ac'(\Syz^2_R(\kk))$ is faithful.
\end{Lemma}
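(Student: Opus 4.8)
The plan is to identify the $G$-representation $U := \Ac'(\Syz^2_R(\kk))$ completely and then verify faithfulness by a direct gcd computation. After conjugating we may assume $G = \langle g\rangle$ acts diagonally and is in the standard normal form of a two-dimensional cyclic quotient singularity, say $g\cdot u = \zeta u$ and $g\cdot v = \zeta^q v$ with $\zeta$ a primitive $n$-th root of unity, $n = \card{G}$ and $\gcd(q,n) = 1$ (this last condition is precisely the smallness of $G$). Identifying $\ZZ/n$ with the character group via $\chi_c(g) = \zeta^c$, the group acts on $S = \kk\llbracket u,v\rrbracket$ through the weight grading $\deg u = 1$, $\deg v = q$ in $\ZZ/n$, and $R = S^G$ is the degree-$0$ part; a sum of characters $\bigoplus_i\chi_{c_i}$ is faithful if and only if the exponents $c_i$ generate $\ZZ/n$.

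First I would compute $U$ from the definition $\Ac'(N) = (S\otimes_R N)^{**}\otimes_S \kk$. Let $f_0, \dots, f_m$ be the minimal monomial generators of $\mm$, i.e. the Hilbert basis of the semigroup $\{(i,j) \in \NN^2 : i + qj \equiv 0 \bmod n\}$, viewed as elements of $S$ and sorted by strictly decreasing $u$-exponent $i_0 > i_1 > \dots > i_m$; here $m = \rank_R \Syz^2_R(\kk)$ equals the embedding dimension minus one. From the presentation $\Syz^2_R(\kk) = \ker\big(R^{m+1} \xrightarrow{(f_0,\dots,f_m)} R\big)$ I would argue that $(S\otimes_R \Syz^2_R(\kk))^{**}$ is the reflexive hull of the syzygy module $\Syz_S(f_0,\dots,f_m)$, carrying its natural equivariant structure (the basis vector $e_i$ has the weight of $f_i$, namely $0$). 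The key input here is that $G$ is small, so it acts freely on the punctured plane and the map $\mathrm{Spec}\,S\setminus\{0\} \to \mathrm{Spec}\,R\setminus\{0\}$ is étale; hence $S\otimes_R(-)$ agrees with the étale pullback in codimension one, and the two reflexive modules coincide.

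Next I would read off the weights. Since the $f_l = u^{i_l}v^{j_l}$ form a staircase, the module $\Syz_S(f_0,\dots,f_m)$ is minimally generated by the $m$ consecutive syzygies $\tau_l = \tfrac{\mathrm{lcm}(f_{l-1},f_l)}{f_{l-1}}e_{l-1} - \tfrac{\mathrm{lcm}(f_{l-1},f_l)}{f_l}e_l$ (the Taylor/Hilbert--Burch resolution of a monomial ideal in two variables is already minimal). A short computation gives $\tfrac{\mathrm{lcm}(f_{l-1},f_l)}{f_l} = u^{\,i_{l-1}-i_l}$, so $\tau_l$ is homogeneous of weight $w_l := i_{l-1}-i_l \bmod n$, and therefore
\[ U = \Ac'(\Syz^2_R(\kk)) \cong \bigoplus_{l=1}^m \chi_{w_l} \]
(up to replacing each $\chi_{w_l}$ by its dual, which is irrelevant below).

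Finally, faithfulness amounts to $\gcd(w_1,\dots,w_m,n) = 1$. Since the $w_l$ telescope, the subgroup of $\ZZ/n$ they generate equals the subgroup generated by the Hilbert-basis $u$-exponents $\{i_0,\dots,i_m\}$ (using $i_0 = n \equiv 0$). I would then observe that $u^{\,n-q}v$, i.e. the lattice point $(n-q,1)$, is always a Hilbert-basis generator: having minimal positive $v$-exponent, any nontrivial decomposition would force a summand $(kn,0)$ with $k\ge 1$, which is impossible for $0 < q < n$. Hence $n - q \equiv -q$ occurs among the $i_l$, and since $\gcd(q,n)=1$ this single exponent already generates all of $\ZZ/n$, so $U$ is faithful. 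The main obstacle is the second paragraph, namely correctly transporting $\Syz^2_R(\kk)$ to an equivariant $S$-module and justifying its identification with $\Syz_S(f_0,\dots,f_m)^{**}$ through the codimension-one étale argument; the monomial bookkeeping and the concluding gcd computation are then routine.
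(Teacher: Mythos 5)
Your argument is correct and lands on the same punchline as the paper's, but both halves are executed differently, so a comparison is worthwhile. For the identification of $\Ac'(\Syz^2_R(\kk))$, the paper avoids your \'etale/codimension-one descent entirely: it works on the $S$-side from the start, takes the free resolution $0 \to F \to S^{\mu} \to S \to S/(p_1,\dots,p_\mu) \to 0$ of the monomial ideal generated by the minimal generators of $\mm_R$, writes the free module $F$ as $S\otimes_\kk V$, applies the exact functor $(-)^G$ to recognize $\Syz^2_R(\kk) = \Ac(V)$, and then invokes $\Ac'\circ\Ac \cong \mathrm{id}$ from the Auslander correspondence. Your route through $(S\otimes_R -)^{**}$ and flatness of $S$ over $R$ on the punctured spectrum is also valid -- reflexive modules over the normal ring $S$ are determined by their restriction to the complement of a codimension-two set -- but it is the heavier of the two identifications and, as you note yourself, is the step you would have to write out carefully. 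For faithfulness the paper does strictly less than you: it exhibits the single consecutive syzygy $s = v e_1 - u^{a} e_2$ between $u^n$ and $u^{n-a}v$, checks via Proposition 3.1 of \cite{MS05} that it is a minimal generator of $F$, and observes that its weight $a$ is coprime to $n$; this is precisely your $\tau_1$ with $w_1 = q$, so your telescoping/gcd argument over all the staircase syzygies, while yielding the full character decomposition $\bigoplus_l \chi_{w_l}$ as a bonus, is not needed for the conclusion. One small inaccuracy to fix: the Taylor complex of a monomial ideal in two variables is not minimal once there are at least three generators; what is minimal (and what you actually use) is the Hilbert--Burch resolution by the $m$ consecutive syzygies, which is exactly what the cited proposition of \cite{MS05} provides.
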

\begin{proof}
	Let $n := \card{G}$ and let $g \in G$ be a generator for $G$.
	After a change of coordinates in $S = \kk\llbracket u,v\rrbracket$,
	we may assume that $g$ acts as $g \cdot u = \xi u$ and $g \cdot v = \xi^a v$ for some primitive $n$-th root of unity $\xi \in \kk$ and $a \in \NN, a < n$.
	Moreover, $G$ being small implies that $a$ is coprime to $n$.
	
	Every monomial in $S$ is an eigenvector under the action of $G$, hence the monomials invariant under $G$ form a $\kk$-basis of $R$. These are exactly the monomials $u^iv^j$ with $i + aj \equiv 0 \mod{n}$.
	In particular, the maximal ideal $\mm_R$ of $R$ has a minimal system of generators $p_1,\dots,p_{\mu}$ consisting of monomials.
	We may assume that $p_1 = u^n$ and $p_2 = u^{n-a} v$.
	
	We consider $p_1,\dots,p_{\mu}$ as elements of $S$ and let $\phi: S^\mu \to S$ be the map sending $e_i$ to $p_i$.
	Let $F$ and be the kernel of $\phi$ and let $N := S/(p_1,\dots,p_{\mu})$ be the cokernel of $\phi$.
	
	As $S$ is a regular ring of dimension $2$, the exact sequence
	\begin{equation}\label{S*Gimportantsequence}
	0 \rightarrow F \rightarrow S^\mu \xrightarrow{\phi} S\rightarrow N\rightarrow 0
	\end{equation}
	is a free resolution of $N$. In particular, $F$ is a free $S$-module.
	As $F$ is free, it is of the form $S \otimes_\kk V$ for some representation $V$ of $G$.
	
	We apply the exact $G$-invariant functor $(\_)^G$ to sequence \eqref{S*Gimportantsequence} and we get
	\[
	0\rightarrow M\rightarrow R^\mu \rightarrow R\rightarrow \kk\rightarrow0.
	\]
	Since $p_1,\dots,p_{\mu}$ is a system of generators of the maximal ideal of $R$, we obtain a copy of the residue field $\kk$ in the last position.
	It follows that the module $M$ is the second syzygy of the residue field, in other words $\Syz^2_R(\kk) = M = (S\otimes_{\kk}V)^G = \Ac(V)$ and hence $\Ac'(\Syz^2_R(\kk)) = V$.
	
	In order to show that $G$ acts faithfully on $V$, it is enough to show that there exists an element $s \in V$ with a trivial stabilizer.
	For this we consider the element $s := v e_1 - u^{a} e_2 \in S^\mu$.
	Clearly $\phi(s) = v p_1 - u^{a} p_2 = 0$, so $s \in F$.
	Since $(p_1, \dotsc, p_{\mu}) \subset S$ is a monomial ideal in two variables, it follows from Proposition 3.1 of \cite{MS05} that $s$ is a minimal generator of $F$, so it corresponds to a non-zero element of $V$.
	The action of $g$ on $s$ is given by
	\[
	g \cdot s = \xi^{a} v p_1 - \xi^a u^{a} p_2 = \xi^a s
	\]
	Therefore, as $a$ is coprime with $n$, the element $s$ has a trivial stabilizer and hence $G$ acts faithfully on $V$.
\end{proof}

\section{Representations and Lattice points}\label{section:representations}
In view of \Cref{cor:auslander} we need to consider multiplicities of irreducible representations of an abelian group inside symmetric powers of a given representation.
This is achieved in the following theorem.

\begin{Theorem}\label{theoremcyclicrepresentation}
	Let $\kk$ be an algebraically closed field and $G$ be a finite abelian group whose order is not a multiple of $\chara \kk$.
	Let $V, V'$ be a faithful and an irreducible $\kk$-representation of $G$, respectively.
	Then it holds that
	\[
		\lim_{N\rightarrow+\infty} \frac{\sum_{q=0}^N \mult(V', \Sym^q(V))}
		{\sum_{q=0}^N \dim_\kk \Sym^q(V)} = \frac{1}{\card{G}}.
	\]
\end{Theorem}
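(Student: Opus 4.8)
The plan is to reduce the problem to counting lattice points. Since $G$ is a finite abelian group whose order $n := |G|$ is coprime to $\chara \kk$, and $\kk$ is algebraically closed, the group algebra $\kk[G]$ is semisimple and $G$ is diagonalizable. Thus every $\kk$-representation of $G$ decomposes as a direct sum of one-dimensional representations (characters), and the irreducible representations are exactly the elements of the character group $\widehat{G} = \Hom(G, \kk^\times)$, which is itself a finite abelian group of order $n$. Writing $V = L_{\chi_1} \oplus \dots \oplus L_{\chi_d}$ as a sum of character lines, the symmetric power $\Sym^q(V)$ has a basis indexed by monomials of degree $q$ in the coordinates, i.e. by tuples $(a_1, \dots, a_d) \in \NN^d$ with $\sum a_i = q$, and the line spanned by such a monomial carries the character $\sum a_i \chi_i \in \widehat{G}$. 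Hence $\mult(V', \Sym^q(V))$ is precisely the number of tuples $(a_1,\dots,a_d) \in \NN^d$ with $\sum_i a_i = q$ and $\sum_i a_i \chi_i = \chi'$ in $\widehat{G}$, where $\chi'$ is the character corresponding to $V'$.

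Next I would interpret both the numerator and denominator as counts of lattice points in dilates of a polytope. The denominator $\sum_{q=0}^N \dim_\kk \Sym^q(V)$ counts all monomials of degree at most $N$, i.e. the lattice points of $\NN^d$ inside the simplex $\Delta_N := \set{x \in \RR^d_{\geq 0} \with \sum_i x_i \leq N}$. This is a standard Ehrhart-type count asymptotic to $\Vol(\Delta_N) = N^d/d!$, up to lower-order terms. The numerator counts those lattice points additionally satisfying the congruence condition $\sum_i a_i \chi_i = \chi'$, which is a system of congruences modulo the orders of the cyclic factors of $\widehat{G}$. The key claim is then that, as $N \to \infty$, the lattice points of $\Delta_N$ satisfying this congruence condition are \emph{equidistributed} among the $|\widehat{G}| = n$ possible values of $\sum_i a_i \chi_i$; that is, each residue class gets a $1/n$ fraction of the total in the limit. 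Granting this, the ratio tends to $1/n = 1/\card{G}$, as claimed.

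The faithfulness hypothesis on $V$ is what makes the equidistribution work, and I expect the equidistribution step to be the main obstacle. Concretely, the character values $\chi_1, \dots, \chi_d \in \widehat{G}$ associated to the summands of $V$ must \emph{generate} $\widehat{G}$ as a group: this is exactly the statement that $V$ is faithful, since $g \in G$ acts trivially on $V$ iff $\chi_i(g) = 1$ for all $i$, and faithfulness says only $g = 1$ does this, which by duality forces the $\chi_i$ to generate $\widehat{G}$. Because the $\chi_i$ generate, the map $\NN^d \to \widehat{G}$, $(a_i) \mapsto \sum_i a_i \chi_i$, is surjective, and restricted to each coset of the finite-index sublattice $L := \ker(\ZZ^d \to \widehat{G})$ it is constant; the cosets partition $\ZZ^d$ into $n$ translates of $L$, each of the same covolume. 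The technical heart is to show that intersecting with the growing simplex $\Delta_N$ distributes lattice points evenly across these $n$ cosets in the limit — which follows from the observation that each coset-count is again an Ehrhart-type quasi-polynomial with the same leading term $\frac{1}{n}\cdot\frac{N^d}{d!}$, so the boundary discrepancies are of lower order in $N$ and wash out after dividing. Assembling these pieces gives
\[
	\lim_{N\rightarrow+\infty} \frac{\sum_{q=0}^N \mult(V', \Sym^q(V))}{\sum_{q=0}^N \dim_\kk \Sym^q(V)} = \frac{N^d/(n\cdot d!) + O(N^{d-1})}{N^d/d! + O(N^{d-1})} \longrightarrow \frac{1}{n} = \frac{1}{\card{G}}.
\]
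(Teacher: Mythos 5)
Your proposal is correct and follows essentially the same route as the paper: decompose $V$ into characters, translate $\mult(V',\Sym^q(V))$ into a count of lattice points in a coset of a sublattice $L\subseteq\ZZ^\nu$ whose index is $\card{G}$ by faithfulness (the paper proves this via its Lemma \ref{lemmacyclicrepresentation1} on $\Hom_{\ZZ}(H,\ZZ/n)$, you via duality in $\widehat{G}$ --- the same argument), and conclude by the volume/Ehrhart asymptotics of lattice points in dilates of the simplex.
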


For the proof of the theorem, we first prove two lemmas. We start with an elementary result from group theory.

\begin{Lemma}\label{lemmacyclicrepresentation1}
	Let $n, \nu \in \NN$ and let $H\subseteq(\ZZ/n)^{\nu}$ be a subgroup. Let further $L$ be the kernel of the map
	\[
		\ell:\ZZ^\nu\rightarrow\Hom_{\ZZ}(H,\ZZ/n)
	\]
	given by $x\rightarrow\left(y\mapsto\sum_ix_iy_i\right)$. Then $[\ZZ^\nu:L]=\card{H}$. 
\end{Lemma}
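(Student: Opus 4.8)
The plan is to identify the quotient $\ZZ^\nu/L$ with the full dual group $\Hom_\ZZ(H, \ZZ/n)$ and then invoke Pontryagin duality. First I would note that $n\ZZ^\nu \subseteq L$: if every $x_i$ is divisible by $n$, then each product $x_i y_i$ vanishes in $\ZZ/n$. Hence $\ell$ factors through the reduction map $\ZZ^\nu \twoheadrightarrow (\ZZ/n)^\nu$, and by the first isomorphism theorem $\ZZ^\nu/L \cong \operatorname{im}(\ell) \subseteq \Hom_\ZZ(H, \ZZ/n)$, so that $[\ZZ^\nu : L] = \card{\operatorname{im}(\ell)}$.

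Two facts then complete the argument. First, since $H$ is a finite abelian group whose exponent divides $n$, every homomorphism $H \to \QQ/\ZZ$ takes values in the $n$-torsion subgroup $\tfrac{1}{n}\ZZ/\ZZ \cong \ZZ/n$, so $\Hom_\ZZ(H, \ZZ/n) = \Hom_\ZZ(H, \QQ/\ZZ)$ is the Pontryagin dual of $H$; consequently $\card{\Hom_\ZZ(H, \ZZ/n)} = \card{H}$ by the structure theorem for finite abelian groups. Second, and this is the crux, I would show that $\ell$ is surjective. Granting surjectivity we get $\operatorname{im}(\ell) = \Hom_\ZZ(H, \ZZ/n)$ and conclude $[\ZZ^\nu : L] = \card{H}$.

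For surjectivity, the point is to realize an arbitrary character $\chi \in \Hom_\ZZ(H, \ZZ/n)$ as $y \mapsto \sum_i x_i y_i$ for some $x$. Viewing $\chi$ as a homomorphism $H \to \QQ/\ZZ$ and using that $\QQ/\ZZ$ is an injective (divisible) $\ZZ$-module, $\chi$ extends along the inclusion $H \hookrightarrow (\ZZ/n)^\nu$ to a homomorphism $\tilde\chi \colon (\ZZ/n)^\nu \to \QQ/\ZZ$, whose values again lie in $\tfrac{1}{n}\ZZ/\ZZ \cong \ZZ/n$. Since the standard pairing identifies $(\ZZ/n)^\nu$ with its own dual $\Hom_\ZZ((\ZZ/n)^\nu, \ZZ/n)$, we may write $\tilde\chi = \langle \bar x, -\rangle$ for some $\bar x \in (\ZZ/n)^\nu$; lifting $\bar x$ to $x \in \ZZ^\nu$ yields $\ell(x) = \chi$.

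I expect the injectivity/extension step to be the main obstacle, as it is precisely where the nondegeneracy of the pairing and the self-injectivity of $\ZZ/n$ enter. An equivalent, more self-contained route avoids characters entirely: set $H^\perp := \{z \in (\ZZ/n)^\nu \with \langle z, y\rangle = 0 \text{ for all } y \in H\}$, observe that $L$ is exactly the preimage of $H^\perp$ under reduction mod $n$, so $[\ZZ^\nu : L] = [(\ZZ/n)^\nu : H^\perp]$, and then prove the orthogonality relation $\card{H}\cdot\card{H^\perp} = n^\nu$ for the nondegenerate pairing on $(\ZZ/n)^\nu$. This last identity again reduces to $\card{\Hom_\ZZ(A,\ZZ/n)} = \card{A}$, now applied to $A = (\ZZ/n)^\nu/H$, giving $[\ZZ^\nu : L] = n^\nu/\card{H^\perp} = \card{H}$.
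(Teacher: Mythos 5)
Your proof is correct, and both of the routes you sketch work. The overall skeleton of your first route coincides with the paper's: reduce to $[\ZZ^\nu:L]=\card{\operatorname{im}(\ell)}$, prove that $\ell$ surjects onto $\Hom_\ZZ(H,\ZZ/n)$, and separately verify $\card{\Hom_\ZZ(H,\ZZ/n)}=\card{H}$. The tools differ at both steps. For the counting, the paper splits $H$ into cyclic factors and directly counts homomorphisms of each factor into $\ZZ/n$; you instead observe that $\Hom_\ZZ(H,\ZZ/n)=\Hom_\ZZ(H,\QQ/\ZZ)$ because the exponent of $H$ divides $n$, and quote Pontryagin duality --- same content, slicker packaging. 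For surjectivity, the paper uses the elementary divisor theorem to produce explicit generators $\varphi_i$ of the dual together with explicit preimages $A^t e_i$ under $\ell$, whereas you extend a character along $H\hookrightarrow(\ZZ/n)^\nu$ via the injectivity of $\QQ/\ZZ$ and then use the self-duality of $(\ZZ/n)^\nu$ under the standard pairing; your argument is less computational and makes clearer where the hypotheses enter, while the paper's is more constructive. Your second route, via $H^\perp$, is the genuinely different one: identifying $L$ as the preimage of $H^\perp$ under reduction mod $n$ lets you bypass the surjectivity of $\ell$ entirely and reduces the whole lemma to the single identity $\card{\Hom_\ZZ(A,\ZZ/n)}=\card{A}$ applied to $A=(\ZZ/n)^\nu/H$; this is arguably the most economical of the three arguments. (Incidentally, the paper's phrase that each cyclic factor ``has order divisible by $n$'' should read ``order dividing $n$''; your formulation in terms of the exponent avoids this slip.)
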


\begin{proof}
	We first show that $\card{H} = \card{\Hom_{\ZZ}(H, \ZZ/n)}$. $H$ is a finitely generated abelian group, so it splits into a direct sum $H = \bigoplus_i H_i$ of cyclic groups. Moreover, every $H_i$ has order divisible by $n$.
	The generator of a $H_i$ of order $u$ can be mapped to any element whose order divides $u$ in $\ZZ/n$, and there are exactly $u$ such elements.
	Hence
	\[
	\card{H} = \prod_i \card{H_i} = \prod_i \card{\Hom_{\ZZ}(H_i, \ZZ/n)} = \card{\Hom_{\ZZ}(H, \ZZ/n)}.
	\]
	\par Next we show that $\ell$ is surjective.
	We denote  by $ \langle -, -\rangle$ the standard scalar product, i.e. $\ell(x)(y) = \sum_i x_i y_i = \langle x,y \rangle$.
	Let $e_1, \dotsc, e_\nu$ be the unit vectors of $(\ZZ/n)^\nu$.
	By the elementary divisor theorem, there exists an invertible matrix $A \in (\ZZ/n)^{\nu\times \nu}$, elements $\alpha_1, \dotsc, \alpha_r \in \ZZ/n$, $r \leq \nu$, and generators $h_1, \dotsc, h_r$ of $H$, such that $A h_i := \alpha_i e_i$ for $1 \leq i \leq r$.
	It is clear that $\Hom_{\ZZ}(H, \ZZ/n)$ is generated by the maps $\varphi_i$ sending $h_j$ to $\alpha_j$ for $j=i$ and all other generators to zero. 
	But the map $\varphi_i$ is the image under $\ell$ of (an arbitrary lifting to $\ZZ^\nu$ of) the vector $A^t e_i$, where $A^t$ denotes the transpose of $A$:
	\begin{align*}
		\ell(A^t e_i)(h_j) &= \langle A^t e_i,h_j \rangle = \langle e_i, A h_j \rangle \\
		&= \langle e_i,\alpha_j e_j \rangle = \alpha_j \delta_{ij}= \varphi_i(h_j),
	\end{align*}
	where $\delta_{ij}$ is the Kronecker symbol.
	So the claim follows.
\end{proof}

\begin{Lemma}\label{lemmacyclicrepresentation2}
	Let $G, V$ and $V'$ be as in \Cref{theoremcyclicrepresentation}, and let $\nu:=\dim_{\kk}V$.
	Then $\dim_\kk \Sym^q(V) = \card{\set{x \in \NN^\nu \with |x| = q}}$ and
	there exists a lattice $L\subseteq \ZZ^\nu$ and an $a_0\in\ZZ^\nu$ such that
	\[
		\mult(V', \Sym^q(V)) = \card{\set{x \in \NN^\nu \cap (a_0 + L) : |x| = q}}.
	\]
	and $[\ZZ^\nu:L]=\card{G}$.
	Here $|x| := \sum_i x_i$.
\end{Lemma}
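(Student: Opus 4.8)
The plan is to reduce the statement to \Cref{lemmacyclicrepresentation1} by encoding the characters of $V$ as an additive subgroup of $(\ZZ/n)^\nu$. Since $G$ is abelian, $\kk$ is algebraically closed, and $\chara\kk$ does not divide $\card{G}$, every irreducible $\kk$-representation of $G$ is one-dimensional and $V$ decomposes into a direct sum of characters. First I would choose a basis $e_1,\dotsc,e_\nu$ of $V$ on which $G$ acts diagonally, say $g\cdot e_i = \chi_i(g)e_i$ for characters $\chi_i\colon G\to\kk^\times$. Then the monomials $e^x = e_1^{x_1}\cdots e_\nu^{x_\nu}$ with $\card{x}=q$ form a $\kk$-basis of $\Sym^q(V)$, which immediately yields $\dim_\kk\Sym^q(V)=\card{\set{x\in\NN^\nu\with\card{x}=q}}$. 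Each such monomial is itself an eigenvector spanning the one-dimensional subrepresentation with character $\chi^x:=\prod_i\chi_i^{x_i}$; hence $\mult(V',\Sym^q(V))$ counts exactly those $x\in\NN^\nu$ with $\card{x}=q$ whose character $\chi^x$ equals the character $\chi'$ of $V'$.

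Next I would build the bridge to the lattice-point setting. Let $n:=\exp(G)$, so that all characters take values in the group $\mu_n\subseteq\kk^\times$ of $n$-th roots of unity, which has order $n$ because $\kk$ is algebraically closed with $\chara\kk\nmid n$. Fixing a generator of $\mu_n$ identifies $\mu_n\cong\ZZ/n$ and turns the tuple of characters into a group homomorphism $\psi\colon G\to(\ZZ/n)^\nu$, $g\mapsto(\log\chi_1(g),\dotsc,\log\chi_\nu(g))$. I set $H:=\psi(G)\subseteq(\ZZ/n)^\nu$. Under this identification the character $\chi^x$ is precisely the homomorphism $H\to\ZZ/n$, $\psi(g)\mapsto\sum_i x_i\psi_i(g)$, i.e. the element $\ell(x)\in\Hom_\ZZ(H,\ZZ/n)$ from \Cref{lemmacyclicrepresentation1}. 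Consequently the assignment $x\mapsto\chi^x$ factors through $\ell$, and I define $L:=\ker\ell$, which is a finite-index subgroup and hence a lattice.

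It then remains to read off the three claims. Faithfulness of $V$ means precisely that $\psi$ is injective, so $H\cong G$ and $\card{H}=\card{G}$; \Cref{lemmacyclicrepresentation1} yields $[\ZZ^\nu:L]=\card{H}=\card{G}$. For the multiplicity, the surjectivity of $\ell$ established in \Cref{lemmacyclicrepresentation1} furnishes an $a_0\in\ZZ^\nu$ whose image $\ell(a_0)$ corresponds to $\chi'$ under $\Hom_\ZZ(H,\ZZ/n)\cong\hat G$; then $\chi^x=\chi'$ is equivalent to $\ell(x)=\ell(a_0)$, i.e. $x\in a_0+L$, giving the desired description $\mult(V',\Sym^q(V))=\card{\set{x\in\NN^\nu\cap(a_0+L)\with\card{x}=q}}$.

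The main obstacle I anticipate is bookkeeping rather than conceptual: one must verify that the bilinear pairing defining $\ell$ in \Cref{lemmacyclicrepresentation1} matches the pairing $(x,\psi(g))\mapsto\sum_i x_i\psi_i(g)$ coming from the characters, and that the isomorphism $\Hom_\ZZ(H,\ZZ/n)\cong\hat G$ is compatible with sending $V'$ to $\chi'$. Care is also needed to confirm that the entries of $x$ may be reduced modulo $n$ in this pairing without changing $\chi^x$ (since each $\chi_i$ has order dividing $n$), so that $\chi^x$ depends only on the class of $x$ modulo $L$.
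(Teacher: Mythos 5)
Your proof is correct and follows essentially the same route as the paper: decompose $V$ into characters, identify $\Sym^q(V)$ with degree-$q$ monomials, translate the multiplicity of $V'$ into a lattice-point count via the pairing $\ell$, and invoke \Cref{lemmacyclicrepresentation1} (together with faithfulness) to get $[\ZZ^\nu:L]=\card{G}$. The only cosmetic difference is that you obtain $a_0$ directly from the surjectivity of $\ell$, whereas the paper uses a counting argument comparing the $\card{G}$ cosets of $L$ with the $\card{G}$ irreducible representations; both are valid.
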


\begin{proof}
	The representation $V$ splits into one-dimensional representations $V = \bigoplus_i V_i$.
	Let $x_i$ be a basis vector for $V_i$, then $\Sym V$ can be identified with the polynomial ring $\kk[x_1, \dotsc, x_\nu]$.
	It is clear that $\dim_k \Sym^q(V)$ equals the number of monomials of degree $q$, so the claimed formula holds.
	
	$G$ acts faithfully and diagonally on $V = \bigoplus_i V_i$, so we can identify it with a subgroup $G_1 \subseteq (\kk^*)^\nu$.
	The action of $G_1$ is now given by 
	\[
		g \cdot \underline{x}^{\underline{a}} = g^{\underline{a}} \underline{x}^{\underline{a}}, 
	\]
	where $\underline{x}^{\underline{a}} = \prod_i x_i^{a_i}$ is a monomial and $g^{\underline{a}} := \prod_i g_i^{a_i}, g = (g_1, \dotsc, g_\nu)$.
	
	For each point $\underline{a}\in\ZZ^\nu$, we have a one-dimensional representation $\rho_{\underline{a}}:G_1\rightarrow\kk^*$, $\rho_{\underline{a}}(g)=g^{\underline{a}}$. 
	So, we obtain a map
	\begin{align*}
		\ell: \ZZ^\nu &\rightarrow \Hom_{\ZZ}(G_1, \kk^*)\\
		\underline{a} &\mapsto \rho_{\underline{a}}.
	\end{align*}
	Two points $\underline{a}$ and $\underline{b}$ are mapped to the same representation if and only if
	$\underline{a}-\underline{b}\in\ker\ell$. 
	Let $L$ be the kernel of $\ell$. 
	We claim that $[\ZZ^\nu : L] = \card{G_1}$.
	Let $\mu_n(\kk) \subseteq \kk^*$ be the group of $n$-th roots of unity.	
	The order of every element of $G_1$ is divisible by $n$, so $G_1$ is in fact a subgroup of $\mu_n(\kk)^\nu$.
	Further, every $\rho_{\underline{a}}$ takes values in $\mu_n(\kk)$. 
	Note that $\mu_n(\kk)$ is (not canonically) isomorphic to $\ZZ/n$ and let $G_2 \subset (\ZZ/n)^\nu$ be the image of $G_1$ under such an isomorphism.
	Then the map $\ell$ corresponds to the map
	\begin{align*}
		\tilde{\ell}: \ZZ^\nu &\rightarrow \Hom_{\ZZ}(G_2, \ZZ/n),\\
		\underline{a} &\mapsto \Big(g\mapsto\sum_i a_i g_i\Big), 
	\end{align*}
	and the claim follows from the preceding Lemma \ref{lemmacyclicrepresentation1}.
	\par Now, since $[\ZZ^\nu : L] = \card{G_1}$ the lattice $L$ divides $\ZZ^{\nu}$ into $\card{G_1}$ cosets.
	On the other hand, there are exactly $\card{G_1}$ irreducible representations, so there exists $a_0\in\ZZ^{\nu}$ such that $\ell(a_0)$ is the representation $V'$ which completes the proof.
\end{proof}

\noindent Now we are ready to prove Theorem \ref{theoremcyclicrepresentation}.

\begin{proof}[Proof of \Cref{theoremcyclicrepresentation}]
	Let $\nu:=\dim_{\kk}V$ and let $L \subseteq \ZZ^\nu$, $a_0 \in \ZZ^\nu$ be as in  Lemma \ref{lemmacyclicrepresentation2}.
	Further, let $\Delta \subset \RR^\nu$ denote the simplex spanned by the origin and the unit vectors. 
	By Lemma \ref{lemmacyclicrepresentation2}, $\sum_{q=0}^N \dim_\kk \Sym^q(V)$ equals the number of lattice points in $N \Delta = \set{N p \with p \in \Delta}$,
	and $\sum_{q=0}^N \mult(V', \Sym^q(V))$ equals $\card{N\Delta \cap (a_0+L)}$.
	
	By choosing a basis for $L$ we find a $\nu\times \nu$ matrix $A$ of full rank, such that $L = A \ZZ^\nu$.
	Note that by the elementary divisor theorem, $A$ can be diagonalized and we see that
	$\det A = [\ZZ^\nu : L]$. Now we compute:
	\begin{align*}
		\lim_{N\rightarrow \infty} \frac{\sum_{q=0}^N\mult(V', \Sym^q(V))}{\sum_{q=0}^N\dim_\kk \Sym^q(V)} &= 
		\lim_{N\rightarrow \infty} \frac{\card{N\Delta \cap (a_0+L)}}{\card{N\Delta \cap \ZZ^\nu}} \\
		&= \frac{ \lim_{N\rightarrow \infty} 1/N^\nu\card{\left(N A^{-1}\Delta \cap (A^{-1}a_0+\ZZ^\nu)\right)}}{ \lim_{N\rightarrow \infty} 1/N^\nu\card{N\Delta \cap \ZZ^\nu}} \\
		& 
		= \frac{\Vol(A^{-1}\Delta)}{\Vol(\Delta)} 
		= \frac{\det(A^{-1})\Vol(\Delta)}{\Vol(\Delta)} = \frac{1}{\det A} \\
		&= \frac{1}{[\ZZ^\nu:L]} = \frac{1}{\card{G}}.
	\end{align*}
\end{proof}

We close with the proof of Theorem \ref{thm:gensymsig}.

\begin{proof}[Proof of \Cref{thm:gensymsig}]
Let $N$ be the MCM $R$-module $\Syz^2_R(\kk)$, and we fix $V:=\Ac'(N)$, and $V':=\Ac'(M)$.
From Corollary \ref{cor:auslander} we have $\rank_R \Sym_R^q(N)^{**} = \dim_\kk \Sym_\kk^q(V)$, and $\mult(M, \Sym_R^q(N)^{**}) = \mult( V',  \Sym_\kk^q(V))$.
By Theorem \ref{Auslandertheorem} $V'$ is an irreducible representation of $G$ and by Lemma \ref{lemmacyclicrepresentationfaithful} $V$ is a faithful representation of $G$.  
Thus, an application of Theorem \ref{theoremcyclicrepresentation} concludes the proof.
\end{proof}

\section*{Acknowledgements}
The authors thank H. Brenner for suggesting and discussing the topic of the present note.

\printbibliography

\end{document}